\newtheorem{theorem}{Theorem}
\newtheorem*{MatuiTheorem}{Matui's Theorem}
\newtheorem{proposition}[theorem]{Proposition}
\newtheorem{lemma}[theorem]{Lemma}
\theoremstyle{definition}
\newtheorem{definition}[theorem]{Definition}
\newtheorem*{definition*}{Definition}
\newtheorem{note}[theorem]{Note}
\newtheorem{example}[theorem]{Example}
\newcommand{\newword}[1]{\textbf{#1}}
\newcommand{\R}{\mathcal{R}}
\newcommand{\tast}{\text{\small$*$}}
\newcommand{\emptystring}{\varepsilon}
\definecolor{orange}{rgb}{1,0.5,0}
\begin{document}

\title{On the Asynchronous Rational Group}

\author{James Belk}
\address{Mathematics Program \\ Bard College \\ Annandale-on-Hudson, New York 12504 \\ USA}
\email{belk@bard.edu}
\urladdr{http://faculty.bard.edu/belk}

\author{James Hyde}
\address{School of Mathematics and Statistics \\ University of St Andrews \\ St Andrews, UK}
\email{jameshydemaths@gmail.com}
\urladdr{http://www-groups.mcs.st-andrews.ac.uk/~jhyde/}
\thanks{The second author's work was partially supported by an Engineering and Physical Sciences Research Council (EPSRC) PhD studentship}

\author{Francesco Matucci}
\address{Instituto de Matem\'{a}tica, Estat\'{i}stica e Computa\c c\~{a}o Cient\'{i}fica \\ Universidade Estadual de Campinas (UNICAMP) \\ S\~{a}o Paulo, Brasil}
\email{francesco@ime.unicamp.br}
\urladdr{https://www.sites.google.com/site/francescomatucci}
\thanks{The third author is a member of the Gruppo Nazionale per le Strutture Algebriche, Geometriche e le loro Applicazioni (GNSAGA) of the Istituto Nazionale di Alta Matematica (INdAM) and gratefully acknowledges the support of the 
Funda\c{c}\~ao de Amparo \`a Pesquisa do Estado de S\~ao Paulo 
(FAPESP Jovens Pesquisadores em Centros Emergentes grant 2016/12196-5),
of the Conselho Nacional de Desenvolvimento Cient\'ifico e Tecnol\'ogico (CNPq 
Bolsa de Produtividade em Pesquisa PQ-2 grant 306614/2016-2) and of
the Funda\c{c}\~ao para a Ci\^encia e a Tecnologia  (CEMAT-Ci\^encias FCT project UID/Multi/04621/2013).}

\subjclass[2010]{Primary 20F65; Secondary 20E32, 20F05, 20F10, 68Q70}
\begin{abstract}
We prove that the asynchronous rational group defined by Grigorchuk, Nekrashevych, and Sushchanskii is simple and not finitely generated.  Our proofs also apply to certain subgroups of the rational group, such as the group of all rational bilipschitz homeomorphisms.
\end{abstract}

\maketitle

In \cite{GNS}, Grigorchuk, Nekrashevych, and Sushchanskii defined the group $\R$ of all asynchronous rational homeomorphisms of the Cantor set (denoted~$\mathcal{Q}$ in~\cite{GNS}).  The purpose of this note is to prove the following theorem.

\begin{theorem}\label{thm:MainTheorem}$\R$ is simple but not finitely generated.
\end{theorem}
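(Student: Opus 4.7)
The plan is to treat the two halves of the theorem separately. The presence of a named \emph{Matui's Theorem} environment in the preamble strongly suggests that simplicity is obtained by realising $\R$ as (or within) the topological full group of an \'etale groupoid on the Cantor set and invoking Matui's theorem. The non-finite-generation half will have to rest on a more hands-on invariant, since simplicity rules out any abelianization argument.

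For simplicity, I would present $\R$ as the topological full group of the groupoid of germs of asynchronous rational homeomorphisms --- or equivalently of some shift-like groupoid refined to allow asynchronous rewriting --- and check that this groupoid is effective, Hausdorff, and minimal. Minimality should follow from the fact that $\R$ contains enough cylinder-swapping elements to move any basic clopen set into any other, and effectiveness from the fact that a non-identity rational homeomorphism cannot act trivially on a clopen set. Matui's theorem then yields simplicity of the commutator subgroup $D(\R)$. To upgrade this to simplicity of $\R$ itself, I would show $\R$ is perfect by a fragmentation-and-conjugation argument: every $g \in \R$ can be written as a product of elements each supported on a small cylinder, and every such locally supported element can be expressed as a single commutator $[a,b]$ with $a,b \in \R$ carefully chosen to shuffle disjoint cylinders in $\R$. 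Commutator tricks of this Higman-style flavour are standard for Thompson-like groups and should adapt to the asynchronous setting.

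For non-finite-generation, I would look for a complexity invariant $\nu : \R \to \mathbb{N}$ that is unbounded on $\R$ but whose values are bounded on every finitely generated subgroup. The natural candidates are the minimum number of states in an asynchronous transducer representing $g$, an ``asynchrony ratio'' measuring how much $g$ can distort cylinder lengths, or a dynamical invariant recording the permutation $g$ induces on periodic sequences of some fixed large period. The key step is then a submultiplicativity estimate of the form $\nu(gh) \leq F(\nu(g),\nu(h))$ and $\nu(g^{-1}) = \nu(g)$, from which $\nu$ is bounded on $\langle S\rangle$ whenever $S$ is finite; exhibiting explicit $g_n \in \R$ with $\nu(g_n) \to \infty$ would then complete the argument.

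The main obstacle, as I see it, is the choice of invariant for the non-finite-generation half. Composition of asynchronous transducers typically \emph{multiplies} naive complexity measures such as state count, so any submultiplicativity estimate will have to be sharp enough that the unboundedness of $\nu(g_n)$ still outruns the bound available on $\langle S\rangle$. I expect that the correct invariant exploits the specifically \emph{asynchronous} nature of $\R$ --- some measure of how unboundedly $g$ can stretch or contract cylinders --- because the synchronous rational subgroup is a well-behaved Thompson-like object, and the obstruction to finite generation ought to live in the asynchronous excess.
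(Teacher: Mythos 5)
Your strategy for the simplicity half is viable and is in fact the route the paper itself flags as an alternative: $\R$ is full and flexible, so Matui's theorem gives simplicity of $[\R,\R]$, and it remains to prove perfectness. But the perfectness step is where you have hidden the one genuinely $\R$-specific difficulty. The standard way to write a small-support element $f$ as a commutator is to conjugate its support onto infinitely many disjoint cylinders and form the infinite product $(f,(f,(f,\ldots)))$; for the full homeomorphism group this product obviously exists, but for $\R$ you must prove that it is \emph{rational}. The paper does this by an explicit transducer construction (Lemma~\ref{lem:HilbertHotel}: for every $f\in\R$ there is a $g\in\R$ with $g=(f,g)$, obtained by adding a single new initial state to a transducer for $f$). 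Your phrase ``$a,b$ carefully chosen to shuffle disjoint cylinders'' assumes exactly this point without justifying it, and without it the fragmentation-and-conjugation argument does not close. You would also need to verify fullness of $\R$ itself, which follows from the Grigorchuk--Nekrashevych--Sushchanskii characterization of rationality by finiteness of the set of restrictions.

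The non-finite-generation half has a more serious gap: you correctly diagnose that a numerical invariant with a submultiplicative bound cannot work (state counts multiply, so nothing is bounded on a finitely generated subgroup), but you do not produce the invariant, and the candidates you list do not obviously lead anywhere. The paper's solution is not a numerical invariant at all but a family of absorbing \emph{properties} indexed by primes: $f$ is ``oblivious to $p$'' if \emph{some} transducer for $f$ has an accessible cycle whose length is not a multiple of $p$. Any transducer with fewer than $p$ states is automatically oblivious to $p$, and a pigeonhole argument on the product transducer shows that left-multiplying an oblivious-to-$p$ element by anything with fewer than $p$ states preserves obliviousness; hence every element of the submonoid generated by transducers with fewer than $p$ states is oblivious to $p$. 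The witnesses are the maps $f_p$ that flip every $p$th digit: a pumping argument on eventually periodic inputs shows every accessible cycle in \emph{every} transducer for $f_p$ has length divisible by $p$, so $f_p$ is not oblivious to $p$ and cannot lie in that submonoid. The crucial structural features you were missing are (i) quantifying existentially over transducers, which makes the property stable under composition, and (ii) working with cycle lengths modulo $p$ rather than with any quantity that can grow. Without these, your proposed program stalls exactly at the obstacle you identified.
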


Our proof of simplicity follows the general outline of Epstein~\cite{Epstein}, and resembles similar proofs for the simplicity of Thompson's group~$V$, the higher-dimensional Thompson groups~$nV$ defined by Brin~\cite{Brin}, and R\"{o}ver's group $V\mathcal{G}$~\cite{Rover}, though it uses a new trick involving the construction of the element
\[
(f,(f,(f,(f,\ldots))))
\]
from a rational homeomorphism~$f\in\R$.  A similar idea was used by Anderson~\cite{And} to prove that the full group of homeomorphisms of the Cantor set is simple.

The proof that $\R$ is not finitely generated involves the primes that divide the lengths of cycles in the transducer for an element~$f\in\R$.  Specifically, we generate a sequence $\{f_p\}$ of elements of $\R$ (with $p$ prime) that no finitely generated subgroup of~$\R$ can contain.

The techniques of both of our proofs apply not just to $\mathcal{R}$ but also to certain interesting subgroups of~$\mathcal{R}$.  For example:

\begin{theorem}The group of all rational bilipschitz homeomorphisms of\/ $\{0,1\}^\omega$ is simple but not finitely generated.
\end{theorem}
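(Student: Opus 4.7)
The plan is to show that the two arguments used to prove Theorem~\ref{thm:MainTheorem} for $\R$ can be carried out entirely inside the subgroup $\mathcal{B}\subset\R$ of rational bilipschitz homeomorphisms of $\{0,1\}^\omega$. First I would verify that $\mathcal{B}$ is indeed a subgroup: bilipschitzness is preserved under composition and inversion (with multiplied/inverted constants), and the rational condition is preserved under the group operations in $\R$ by definition. This reduces the task to auditing the constructions used in the proof of Theorem~\ref{thm:MainTheorem} and confirming that each one outputs a bilipschitz element whenever its inputs are bilipschitz.

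For simplicity, the proof for $\R$ in the outline of Epstein proceeds by conjugating arbitrary nontrivial elements into elements supported on small clopen pieces, taking commutators, and invoking the self-similar construction $(f,(f,(f,\ldots)))$. Conjugation, commutation, and restriction to clopen sets all preserve bilipschitz constants in a controlled way, so the only delicate step is the infinite-iteration construction. Here I would argue that if $f\in\mathcal{B}$ has bilipschitz constants $L, L'$, then the self-similar element built from $f$ acts on the subtree in a way whose local Lipschitz ratios are all bounded above and below by a single pair of constants (roughly, the maximum and minimum of the ratios of $f$ applied across one level). The key point is that the construction is uniformly self-similar, so finitely many local bilipschitz constants govern the whole homeomorphism.

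For non-finite generation, I would revisit the sequence $\{f_p\}$ built in the proof of Theorem~\ref{thm:MainTheorem}. The construction uses primes appearing in cycle lengths of the transducer, so I would exhibit concrete transducers realising each $f_p$ in which every input state reads one symbol and writes one symbol (or, more generally, reads and writes strings of the same length). Such synchronous transducers automatically define isometries, hence bilipschitz maps with constants $1$. Then the same obstruction argument—that any finitely generated subgroup can only involve finitely many primes in its transducer cycle lengths, while $\{f_p\}$ requires infinitely many—rules out finite generation inside $\mathcal{B}$ just as it does inside $\R$.

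The main obstacle I anticipate is the bilipschitz control of the self-similar construction: one has to keep track of how the iterated insertion of $f$ into smaller and smaller clopen pieces affects distance ratios across different scales, and confirm that no sequence of shrinking pieces produces an unbounded sequence of local Lipschitz constants. I expect this to follow from a uniform estimate exploiting the fact that each copy of $f$ acts on a rescaled copy of the Cantor set by the same rule, so the bilipschitz constants of the global homeomorphism can be taken equal to those of $f$ itself; once this is established, the rest of the argument transfers verbatim from the proof of Theorem~\ref{thm:MainTheorem}.
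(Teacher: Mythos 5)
Your proposal is correct and follows essentially the same route as the paper, which simply observes that the bilipschitz subgroup of $\R$ satisfies the hypotheses of its general simplicity criterion (the only nontrivial check being that the solution of $h=(g,h)$ inherits the bilipschitz constants of $g$, exactly as you argue) and that the isometries $f_p$ already lie in this subgroup, so Theorem~\ref{thm:NotFG} applies directly. The paper leaves these verifications implicit; your write-up supplies the same checks in more detail.
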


\section{Background and Notation}

Let $\{0,1\}^\omega$ be the Cantor set of all infinite binary sequences, and let $\{0,1\}^\tast$ be the set of all finite binary sequences, including the empty sequence~$\emptystring$.

\begin{definition}
An \textbf{asynchronous binary transducer} 
is a quadruple $(S, s_0, t,o)$, where
\begin{enumerate}
\item $S$ is a finite set (the set of \textbf{internal states} of the transducer);\smallskip
\item $s_0$ is a fixed element of $S$ (the \textbf{initial state});\smallskip
\item $t$ is a function $S \times \{0,1\} \to S$ (the \textbf{transition function}); and\smallskip
\item $o$ is a function $S \times \{0,1\} \to \{0,1\}^\tast$ (the \textbf{output function}).
\end{enumerate}
\end{definition}

\begin{example}Figure~\ref{fig:TransducerExample} shows the \newword{state diagram} for a certain asynchronous transducer.  This is a directed graph with one node for each state~$s\in S$, and a directed edge from $s$ to $s'$ if $t(s,\sigma) = s'$ for some $\sigma\in\{0,1\}$.  We label such an edge by the pair $\sigma\,|\, o(s,\sigma)$, with two labels in the case where $t(s,0) = t(s,1) = s'$.  For example, the transducer in Figure~\ref{fig:TransducerExample} has two states $s_0,s_1$, with transition function
\[
t(s_0,0)=s_1,\quad t(s_0,1)=s_0,\quad t(s_1,0)=s_0,\quad t(s_1,1)=s_0
\]
and output function
\[
o(s_0,0)=\emptystring,\quad o(s_0,1)=11,\quad o(s_1,0)=0,\quad o(s_1,1)=10\tag*{\qedsymbol}
\]
\end{example}
\begin{figure}[b]
\centering
\includegraphics{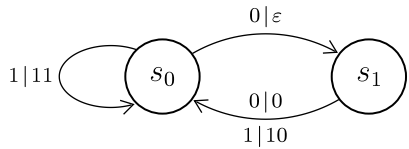}
\caption{The state diagram for an asynchronous transducer.}
\label{fig:TransducerExample}
\end{figure}

\pagebreak
If $T = (S,s_0,t,o)$ is a transducer, $s\in S$ is a state, and $\sigma_1\sigma_2\cdots$ is a binary sequence (finite or infinite), the corresponding \newword{sequence of states} $\{s_n\}$ is defined recursively by $s_1 = s$ and
\[
s_{n+1} = t(s_n,\sigma_n)
\]
for all $n\geq 1$. We define
\[
t(s,\sigma_1\cdots \sigma_n) = s_{n+1}\quad\;\;\text{and}\quad\;\; o(s,\sigma_1\cdots\sigma_n) = o(s_1,\sigma_1)\cdots o(s_n,\sigma_n)
\]
for each $n$, where the product on the right is a concatenation of binary sequences.  This extends the functions $t$ and $o$ to functions
\[
t\colon S\times \{0,1\}^\tast \to S\qquad\text{and}\qquad o\colon S\times\{0,1\}^\tast \to \{0,1\}^\tast.
\]
If $\sigma_1\sigma_2\cdots$ is an infinite binary sequence, we can also define
\[
o(s,\sigma_1\sigma_2\cdots) = o(s_1,\sigma_1)\,o(s_2,\sigma_2)\,\cdots
\]
where the product on the right is an infinite concatenation of binary sequences.

\begin{definition}A homeomorphism $f\colon \{0,1\}^\omega\to\{0,1\}^\omega$ is \newword{rational} if there exists an asynchronous binary transducer $(S,s_0,t,o)$ such that $f(\psi) = o(s_0,\psi)$ for all $\psi\in\{0,1\}^\omega$.
\end{definition}

In \cite{GNS}, Grigorchuk, Nekrashevych and Sushchanskii proved that the set $\mathcal{R}$ of all rational homeomorphisms of $\{0,1\}^\omega$ forms a group under composition.  This is the \newword{(asynchronous) rational group}~$\mathcal{R}$.

\begin{note}Though we are restricting ourselves to a binary alphabet, there is a rational group $\mathcal{R}_A$ associated to any finite alphabet $A$ with at least two symbols.  It was proven in~\cite{GNS} that the isomorphism type of $\mathcal{R}_A$ does not depend on the size of~$A$, so it suffices to consider only the binary case.
\end{note}

Grigorchuk, Nekrashevych, and Sushchanskii also gave a useful test for determining whether a homeomorphism is rational.  If $\alpha\in\{0,1\}^\tast$ is any finite binary sequence, let $I_\alpha$ denote the subset of $\{0,1\}^\omega$ consisting of all sequences that start with~$\alpha$. Note then that $\{I_\alpha \mid \alpha\in\{0,1\}^\tast\}$ is a basis of clopen sets for the topology on $\{0,1\}^\omega$.

Given a homeomorphism $f\colon \{0,1\}^\omega \to \{0,1\}^\omega$ and an $\alpha\in\{0,1\}^\tast$, the \newword{restriction} of $f$ to~$\alpha$ is the map $f|_\alpha\colon \{0,1\}^\omega\to\{0,1\}^\omega$ defined by
\[
f(\alpha\omega) = \beta f|_\alpha(\omega)
\]
where $\beta$ is the greatest common prefix of all sequences in~$f(I_\alpha)$.

\begin{theorem}\label{thm:FinitelyManyRestrictions} Let $f\colon\{0,1\}^\omega\to\{0,1\}^\omega$ be a homeomorphism.  Then $f$ is rational if and only if $f$ has only finitely many different restrictions.
\end{theorem}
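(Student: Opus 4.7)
The plan is to set up an explicit correspondence between the states of a transducer realizing $f$ and the restrictions $f|_\alpha$. For the forward direction, assume $f$ is realized by a transducer $(S,s_0,t,o)$ and put $s_\alpha = t(s_0,\alpha)$. The identity $f(\alpha\omega) = o(s_0,\alpha)\,o(s_\alpha,\omega)$ shows that $f(I_\alpha) = o(s_0,\alpha)\cdot\{o(s_\alpha,\omega):\omega\in\{0,1\}^\omega\}$, so the greatest common prefix $\beta$ of $f(I_\alpha)$ factors as $o(s_0,\alpha)\,\gamma_{s_\alpha}$, where $\gamma_{s_\alpha}$ is the greatest common prefix of $\{o(s_\alpha,\omega):\omega\in\{0,1\}^\omega\}$; this string is finite, since otherwise all outputs from $s_\alpha$ would coincide, violating injectivity of $f$. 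Peeling off $\beta$ then exhibits $f|_\alpha$ as a function depending only on $s_\alpha$, so the number of distinct restrictions is at most $|S|$.

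For the converse, suppose $f$ has only finitely many restrictions. Take $S$ to be the set of those restrictions, with initial state $f|_{\emptystring} = f$, transition $t(g,\sigma) = g|_\sigma$, and output $o(g,\sigma)$ equal to the greatest common prefix of $g(I_\sigma)$; the identity $(f|_\alpha)|_\sigma = f|_{\alpha\sigma}$ shows that $S$ is closed under transitions. Iterating the defining relation $g(\sigma\omega) = o(g,\sigma)\cdot g|_\sigma(\omega)$ gives, for each $\omega = \omega_1\omega_2\cdots$ and each $n$,
\[
f(\omega) \;=\; \beta_1\beta_2\cdots\beta_n \cdot f|_{\omega_1\cdots\omega_n}(\omega_{n+1}\omega_{n+2}\cdots),
\]
where $\beta_i$ denotes the $i$th output of the transducer on input $\omega$. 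The transducer realizes $f$ provided $|\beta_1\cdots\beta_n|\to\infty$.

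The main obstacle, and the only non-routine step, is ruling out the bounded case. Suppose instead that $|\beta_1\cdots\beta_n|\le M$ for some $M$ and all $n$. Then all but finitely many $\beta_i$ are empty, and since the restrictions form a finite set, a pigeonhole argument yields indices $n_1<n_2$ with $f|_{\omega_1\cdots\omega_{n_1}} = f|_{\omega_1\cdots\omega_{n_2}} =: g$ and $\beta_{n_1+1} = \cdots = \beta_{n_2} = \emptystring$. Setting $\alpha = \omega_{n_1+1}\cdots\omega_{n_2}$, the iteration formula applied inside $g$ gives $g(\alpha\tau) = g|_\alpha(\tau) = g(\tau)$ for every $\tau\in\{0,1\}^\omega$, so $g$ identifies $\alpha\,000\cdots$ with $000\cdots$. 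This contradicts the injectivity of $g$, which $g$ inherits from the homeomorphism $f$, completing the proof.
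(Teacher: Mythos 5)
Your proof is correct in substance, but it takes a genuinely different route from the paper: the paper does not prove this theorem directly at all, instead citing \cite[Theorem~2.5]{GNS} and merely observing that a homeomorphism cannot have empty restrictions. Your argument is a self-contained reconstruction of what is behind that citation. The forward direction (restrictions factor through states, so there are at most $|S|$ of them) and the construction of the transducer from the finite set of restrictions are exactly the standard correspondence; the real content is your final step ruling out the case where the output length stays bounded, and this is precisely the point where the homeomorphism hypothesis (injectivity) must be used --- it corresponds to the paper's parenthetical remark, since for a general finitely-restricted map the transducer one builds this way need not reproduce the map. The trade-off is clear: the paper's proof is one line but opaque and dependent on an external source, while yours is longer but makes visible exactly where injectivity enters.

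One small slip in the last step: from $g(\alpha\tau)=g(\tau)$ for all $\tau$ you conclude that $g$ identifies $\alpha\,0^\omega$ with $0^\omega$, but if $\alpha=0^{\,n_2-n_1}$ these two points coincide and no contradiction results. The fix is immediate: the equation $\alpha\tau=\tau$ has the unique solution $\tau=\alpha^\omega$, so choosing any $\tau\neq\alpha^\omega$ gives two distinct points with the same image under $g$ (equivalently, $g(I_\alpha)=g(\{0,1\}^\omega)$ with $I_\alpha$ proper contradicts injectivity). With that one-line repair the proof is complete.
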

\begin{proof}This follows from \cite[Theorem~2.5]{GNS}, since a homeomorphism cannot have any empty restrictions.
\end{proof}

\section{Simplicity}

In \cite{Epstein}, Epstein introduced a general framework for proving that a group $G$ of homeomorphisms is simple.  The first step is to use some variant of Epstein's commutator trick to prove that the commutator subgroup $[G,G]$ is simple, and then one must give an independent proof that $G = [G,G]$.

We start by observing a few important properties of $\R$.

\begin{definition}Let $G$ be a group of homeomorphisms of a topological space~$X$.
\begin{enumerate}
\item We say that a homeomorphism $h$ of $X$ \newword{locally agrees with~$\boldsymbol{G}$} if for every point $p\in X$, there exists a neighborhood $U$ of $p$ and a $g\in G$ such that $h|_U = g|_U$.\smallskip
\item We say that $G$ is \newword{full} if every homeomorphism of $X$ that locally agrees with $G$ belongs to~$G$.
\end{enumerate}
\end{definition}

That is, $G$ is full if one can determine whether a homeomorphism $h$ lies in~$G$ by inspecting the germs of~$h$.  The word ``full'' here comes from the theory of \'{e}tale groupoids, where $G$~is full if and only if $G$ is the ``full group'' of the \'{e}tale groupoid consisting of all germs of elements of~$G$.

Examples of full groups include the full homeomorphism group of any topological space, the full group of diffeomorphisms of any differentiable manifold, and the Thompson groups $F$, $T$, and $V$ acting on the interval, the circle, and the Cantor set, respectively.  Other Thompson-like groups such as R\"{o}ver's group $V\mathcal{G}$ (see~\cite{Rover}) and Brin's higher-dimensional Thompson groups~$nV$ (see~\cite{Brin}) are also full. 

\begin{proposition}\label{prop:LocallyBelongsCantorSet}Let $G$ be a group of homeomorphisms of the Cantor set~$\{0,1\}^\omega$, and let $h$ be a homeomorphism of~$\{0,1\}^\omega$.  Then $h$ locally belongs to $G$ if and only if there exists a partition
\[
\{0,1\}^\omega = I_{\alpha_1} \uplus \cdots \uplus I_{\alpha_n}
\]
and elements $g_1,\ldots,g_n\in G$ such that $h$ agrees with $g_i$ on each $I_{\alpha_i}$.
\end{proposition}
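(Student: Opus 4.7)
The plan is to handle the two directions separately. The reverse direction is immediate: if such a partition exists, then any point $p \in \{0,1\}^\omega$ lies in exactly one of the clopen pieces $I_{\alpha_i}$, which is itself an open neighborhood of $p$ on which $h = g_i$, so $h$ locally agrees with $G$.

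For the forward direction, I would start with the hypothesis and extract, for each $p \in \{0,1\}^\omega$, an open neighborhood $U_p$ and an element $g_p \in G$ with $h|_{U_p} = g_p|_{U_p}$. Since the cylinders $\{I_\alpha : \alpha \in \{0,1\}^\tast\}$ form a basis for the topology of $\{0,1\}^\omega$, I can shrink $U_p$ to a cylinder $I_{\beta_p}$ containing $p$, retaining the equality $h|_{I_{\beta_p}} = g_p|_{I_{\beta_p}}$. The family $\{I_{\beta_p}\}_{p \in \{0,1\}^\omega}$ is then an open cover of the compact space $\{0,1\}^\omega$, so by compactness there is a finite subcover $I_{\beta_1}, \ldots, I_{\beta_m}$ with corresponding elements $g_1, \ldots, g_m \in G$ such that $h|_{I_{\beta_i}} = g_i|_{I_{\beta_i}}$ for each $i$.

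The last step is to replace this finite cover by an honest partition. I set $N = \max_i |\beta_i|$ and consider the partition of $\{0,1\}^\omega$ into the $2^N$ cylinders $I_\alpha$ with $|\alpha| = N$. Any two basic cylinders are either disjoint or comparable under containment, and since the $I_{\beta_i}$ jointly cover $\{0,1\}^\omega$, each $I_\alpha$ with $|\alpha| = N$ must lie inside some $I_{\beta_{j(\alpha)}}$. Relabelling the partition as $I_{\alpha_1}, \ldots, I_{\alpha_n}$ (with $n = 2^N$) and assigning to each $I_{\alpha_k}$ the element $g_{j(\alpha_k)}$ delivers the partition and elements of $G$ demanded by the statement.

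There is no serious obstacle in this argument; the whole proof rests on the compactness of $\{0,1\}^\omega$ together with the elementary combinatorics of cylinder sets, namely that any finite collection of cylinders can be refined to a common partition by cylinders of uniform length.
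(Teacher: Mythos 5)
Your proof is correct and follows essentially the same route as the paper: both directions hinge on the cylinders forming a clopen basis and on compactness yielding a finite subcover by cylinders. The only (immaterial) difference is the last step, where the paper obtains a partition by discarding cylinders contained in others, while you refine the cover to the uniform-length partition by cylinders of length $N=\max_i|\beta_i|$; both are valid.
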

\begin{proof}Clearly $h$ locally belongs to $g$ if it satisfies the given condition.  For the converse, suppose that $h$ locally belongs to~$g$.  Since the $I_\alpha$ form a basis for the topology on $\{0,1\}^\omega$ and $\{0,1\}^\omega$ is compact, there exists a finite cover $\{I_{\alpha_1},\ldots, I_{\alpha_n}\}$ of $\{0,1\}^\omega$ and elements $g_1,\ldots,g_n\in G$ so that $h$ agrees with $g_i$ on each~$I_{\alpha_i}$.  Since any two $I_{\alpha_i}$ are either disjoint or one is contained in the other, we may assume that the cover $\{I_{\alpha_1},\ldots, I_{\alpha_n}\}$ is a partition of~$\{0,1\}^\omega$.
\end{proof}

\begin{proposition}\label{prop:RationalGroupFull}The rational group $\R$ is full.
\end{proposition}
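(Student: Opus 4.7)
The plan is to combine Proposition~\ref{prop:LocallyBelongsCantorSet} with the restriction-counting criterion of Theorem~\ref{thm:FinitelyManyRestrictions}. Given a homeomorphism $h$ of $\{0,1\}^\omega$ that locally agrees with $\mathcal{R}$, Proposition~\ref{prop:LocallyBelongsCantorSet} supplies a partition $\{0,1\}^\omega = I_{\alpha_1}\uplus\cdots\uplus I_{\alpha_n}$ and rational homeomorphisms $g_1,\ldots,g_n\in\mathcal{R}$ with $h|_{I_{\alpha_i}} = g_i|_{I_{\alpha_i}}$ for each $i$. My goal is to show that $h$ has only finitely many distinct restrictions, and then invoke Theorem~\ref{thm:FinitelyManyRestrictions} to conclude $h\in\mathcal{R}$.

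To count restrictions $h|_\alpha$ for $\alpha\in\{0,1\}^\tast$, I would split into two cases according to how $I_\alpha$ sits with respect to the partition. If $\alpha$ begins with some $\alpha_i$, then $I_\alpha\subseteq I_{\alpha_i}$, so $h$ and $g_i$ agree on all of $I_\alpha$. Since $h(I_\alpha)=g_i(I_\alpha)$, they share the same greatest common prefix $\beta$, and peeling off $\beta$ shows $h|_\alpha = g_i|_\alpha$. Each $g_i$ has only finitely many restrictions by Theorem~\ref{thm:FinitelyManyRestrictions}, so the union $\bigcup_{i,\gamma}\{g_i|_{\alpha_i\gamma}\}$ is a finite set.

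The remaining $\alpha$'s are the strict prefixes of some $\alpha_i$ (the only other way $I_\alpha$ can meet the partition is to split across several pieces). There are at most $\sum_i \length{\alpha_i}$ such words, and each contributes a single restriction. Adding these finitely many leftover restrictions to the finite set produced in the previous paragraph gives a finite total, as required.

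I do not anticipate a serious obstacle: the only point requiring a small check is the equality $h|_\alpha = g_i|_\alpha$ when $h$ and $g_i$ coincide on $I_\alpha$, which follows immediately from the definition of restriction once one observes that the greatest common prefix of $h(I_\alpha)$ depends only on the set-theoretic image $h(I_\alpha)$. Everything else is bookkeeping about prefixes, and the argument uses nothing specific to rationality beyond the characterization in Theorem~\ref{thm:FinitelyManyRestrictions}; in fact, the same proof shows that any group of homeomorphisms of $\{0,1\}^\omega$ characterized by a finiteness condition preserved under restriction and finite piecewise gluing will be full.
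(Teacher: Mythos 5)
Your proposal is correct and follows essentially the same route as the paper: apply Proposition~\ref{prop:LocallyBelongsCantorSet} to get a partition into cones on which $h$ agrees with rational elements, observe that $h|_\alpha = g_i|_\alpha$ whenever $\alpha_i$ is a prefix of $\alpha$, and conclude via Theorem~\ref{thm:FinitelyManyRestrictions} that $h$ has only finitely many restrictions. The paper's version is terser (it simply says ``all but finitely many restrictions of $h$ are also restrictions of some $g_i$''), but your explicit case analysis of the strict prefixes is exactly the bookkeeping being elided there.
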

\begin{proof}Let $h$ be a homeomorphism of $\{0,1\}^\omega$, and suppose that $h$ locally belongs to~$\R$.  By Proposition~\ref{prop:LocallyBelongsCantorSet}, there exists a partition
\[
\{0,1\}^\omega = I_{\alpha_1}\uplus \cdots\uplus I_{\alpha_n}
\]
and elements $g_1,\ldots,g_n\in\R$ such that $h$ agrees with $g_i$ on each~$\alpha_i$.  Then $h|_\alpha = g_i|_\alpha$ whenever $\alpha_i$ is a prefix of~$\alpha$, so all but finitely many restrictions of $h$ are also restrictions of some~$g_i$.  Since each $g_i$ is rational, each $g_i$~has finitely many different restrictions by Theorem~\ref{thm:FinitelyManyRestrictions}, and therefore $h$ has finitely many different restrictions as well.  Then $h\in\R$ by Theorem~\ref{thm:FinitelyManyRestrictions}.
\end{proof}

We also require a certain transitivity property.

\begin{definition}Let $G$ be a group of homeomorphisms of the Cantor set.  We say that $G$ is \newword{flexible} if for every pair $E_1,E_2$ of proper, nonempty clopen subsets of~$X$, there exists a $g\in G$ so that $g(E_1)\subseteq E_2$.
\end{definition}

Examples of flexible groups include the full homeomorphism group of the Cantor set, Thompson's groups $T$ and~$V$, and many other Thompson-like groups, such as R\"{o}ver's group~$V\mathcal{G}$ and Brin's groups~$nV$. 

\begin{proposition}\label{prop:RationalGroupFlexible}The rational group $\R$ is flexible.
\end{proposition}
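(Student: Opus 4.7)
The plan is to construct $g$ by patching together finitely many prefix-exchange maps and then appeal to fullness of $\R$ (Proposition~\ref{prop:RationalGroupFull}) to conclude $g\in\R$.  Since every clopen subset of $\{0,1\}^\omega$ is a finite disjoint union of cones $I_\alpha$, I would begin by writing $E_1 = I_{\alpha_1}\uplus\cdots\uplus I_{\alpha_m}$ for some $\alpha_1,\ldots,\alpha_m\in\{0,1\}^\tast$.  Picking any cone $I_\beta\subseteq E_2$ and subdividing it deeply enough, I can select $m$ pairwise disjoint subcones $I_{\beta_1},\ldots,I_{\beta_m}\subseteq E_2$.

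Next I would declare $g$ on $E_1$ to act by the prefix-exchange $\alpha_i\omega\mapsto\beta_i\omega$ on each $I_{\alpha_i}$, giving $g(E_1) = I_{\beta_1}\uplus\cdots\uplus I_{\beta_m}\subseteq E_2$.  To extend $g$ to a homeomorphism of all of $\{0,1\}^\omega$, I need to map the complement $E_1^c$ bijectively onto the complement of $g(E_1)$.  Both complements are nonempty clopen subsets, hence finite disjoint unions of cones, and by repeatedly splitting a single cone $I_\alpha$ into its halves $I_{\alpha 0}\uplus I_{\alpha 1}$ I can arrange for the two partitions to have the same number of cones.  Then pair the cones off in any order and extend $g$ by prefix exchange on each matched pair.

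The resulting $g$ is a homeomorphism of $\{0,1\}^\omega$, and near every point it agrees with a prefix exchange between two disjoint cones.  Such a prefix exchange extends to a rational homeomorphism of $\{0,1\}^\omega$ (swap the two cones, identity elsewhere), realized by an explicit finite-state transducer, so $g$ locally agrees with $\R$; Proposition~\ref{prop:RationalGroupFull} then delivers $g\in\R$.  The construction is essentially bookkeeping with finite partitions of the Cantor set into cones; the only part requiring any attention is arranging the counts of cones in the two complements to match, which is always possible by further subdivision.
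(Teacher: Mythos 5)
Your construction is correct in outline but takes a genuinely different, more self-contained route than the paper. The paper's entire proof is one line: $\R$ contains Thompson's group $V$ (as observed in \cite{GNS}), $V$ is flexible, hence so is $\R$. What you do is essentially reprove the flexibility of $V$ by hand: the map $g$ you build --- a prefix-replacement bijection between two finite partitions of $\{0,1\}^\omega$ into cones --- is precisely an element of $V$. Your approach buys independence from the citation and makes visible why such elements are rational, at the cost of the bookkeeping with complements and cone counts; the paper's approach is shorter but leans on two imported facts.

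One step of your write-up needs repair. You assert that near every point $g$ agrees with a prefix exchange between two \emph{disjoint} cones, realized by the swap-the-two-cones-and-fix-everything-else homeomorphism. Disjointness is not guaranteed: if, say, $E_1=E_2=I_0$, then $m=1$, $\alpha_1=0$, and $I_{\beta_1}\subseteq I_{\alpha_1}$, so the two cones are nested and the naive swap is not defined; the same problem can occur for the pairs you choose when matching the complements. This is easily fixed: either observe that any prefix replacement $I_\alpha\to I_\beta$ with $\alpha,\beta\neq\emptystring$ extends to a homeomorphism of $\{0,1\}^\omega$ by the same complement-matching you already perform (giving an element of $V\subseteq\R$ to feed into Proposition~\ref{prop:RationalGroupFull}), or skip fullness entirely and check directly that your $g$ has only finitely many restrictions --- $g|_{\alpha_i\delta}$ is the identity for every $\delta\in\{0,1\}^\tast$, and likewise on the cones used to cover the complements, while only finitely many other $\alpha$ occur as proper prefixes of the partition words --- so $g\in\R$ by Theorem~\ref{thm:FinitelyManyRestrictions}.
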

\begin{proof}As observed in~\cite{GNS}, $\R$~contains Thompson's group~$V$, and since $V$ is flexible $\R$ must be flexible as well.
\end{proof}

Next we need some notation.  Given two homeomorphisms $f,g\in\mathcal{R}$, let $(f,g)$ denote the homeomorphism
\[
(f,g)(\omega) = \begin{cases} 0\,f(\zeta) & \text{if }\omega = 0\zeta, \\ 1\,f(\zeta) & \text{if }\omega=1\,\zeta.\end{cases}
\]
Note that $(f,g)|_{0\alpha} = f|_\alpha$ and $(f,g)|_{1\alpha} = g|_\alpha$ for all $\alpha\in\{0,1\}^\tast$, so by Theorem~\ref{thm:FinitelyManyRestrictions} $(f,g)$ is rational.

Note also that $f = (f|_0,f|_1)$ for any $f\in\R$ such that $f(I_0)=I_0$ and $f(I_1)=I_1$. In particular, any element of $\R$ that is the identity on $I_1$ can be written as $(g,1)$ for some $g\in\R$, and similarly any element of~$\R$ that is the identity on $I_0$ can be written as $(1,g)$ for some $g\in\R$.

The following lemma lets us define the element
\[
(f,(f,(f,(f,\ldots))))
\]
for a given~$f\in\R$.\pagebreak

\begin{lemma}\label{lem:HilbertHotel}Let $f\in\R$.  Then there exists a $g\in\R$ such that $g=(f,g)$.
\end{lemma}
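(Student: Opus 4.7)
The plan is to define $g$ explicitly by the self-referential formula $g(1^n 0 \zeta) = 1^n 0 f(\zeta)$ for every $n \geq 0$ and $\zeta \in \{0,1\}^\omega$, together with $g(1^\omega) = 1^\omega$. Since every infinite binary sequence either equals $1^\omega$ or has the form $1^n 0 \zeta$ for a unique $n$ and $\zeta$, this is a well-defined function on $\{0,1\}^\omega$. The remaining work is to check that $g$ is a rational homeomorphism and that it satisfies $g = (f,g)$.

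First I would show $g$ is a homeomorphism. Bijectivity is immediate: the inverse is given by the same recipe with $f^{-1}$ in place of $f$. For continuity, on each clopen set $I_{1^n 0}$ the map $g$ is just $f$ transported by the prefix $1^n 0$, hence continuous. The only place requiring attention is the accumulation point $1^\omega$, but the formula manifestly sends each basic neighborhood $I_{1^m}$ into $I_{1^m}$, so continuity holds there as well. A continuous bijection from a compact Hausdorff space to itself is a homeomorphism.

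For rationality I would apply Theorem~\ref{thm:FinitelyManyRestrictions} by listing the restrictions of $g$. The key identity $g(1\omega) = 1\,g(\omega)$, valid for every $\omega$ (including $\omega = 1^\omega$), forces $g|_{1^n} = g$ for all $n \geq 0$. Combined with the obvious $g|_0 = f$, this yields $g|_{1^n 0 \alpha} = f|_\alpha$ for all $n \geq 0$ and $\alpha \in \{0,1\}^\tast$. Every restriction of $g$ is therefore either $g$ itself or a restriction of $f$, so $g$ has only finitely many distinct restrictions and thus lies in $\R$.

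Finally, the identity $g = (f,g)$ is read directly off the construction: on $I_0$ the map $g$ is $0\zeta \mapsto 0 f(\zeta)$, and on $I_1$ it is $1\omega \mapsto 1 g(\omega)$, which is precisely the definition of $(f,g)$. I do not foresee a serious obstacle; the only mildly delicate points are continuity and the finiteness of the set of restrictions at the single accumulation point $1^\omega$, and both succeed because the construction is engineered to preserve arbitrarily long initial runs of $1$s.
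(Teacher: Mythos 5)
Your proof is correct, but it verifies rationality by a different route than the paper. The paper works entirely on the transducer side: it takes a transducer for $f$ and adjoins a single new initial state $s_0'$ with $t(s_0',1)=s_0'$, $t(s_0',0)=s_0$, and outputs $1$ and $0$ respectively, then asserts that the resulting machine computes a homeomorphism $g$ with $g=(f,g)$. You instead define the map $g$ set-theoretically by $g(1^n0\zeta)=1^n0f(\zeta)$ and $g(1^\omega)=1^\omega$, check directly that it is a homeomorphism, and invoke Theorem~\ref{thm:FinitelyManyRestrictions} to get rationality from the list of restrictions $\{g\}\cup\{f|_\alpha : \alpha\in\{0,1\}^\tast\}$. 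The two arguments are dual to one another: your extra restriction $g|_{1^n}=g$ is exactly the paper's extra state $s_0'$, and your identity $g|_0=f$ is exactly the transition $t(s_0',0)=s_0$. What your version buys is that the homeomorphism property and the identity $g=(f,g)$ are read off immediately from the formula, whereas the paper leaves ``it is easy to check'' to the reader; the small price is that you rely on the composition rule for restrictions, $g|_{\alpha\beta}=(g|_\alpha)|_\beta$, and on the observation that the greatest common prefix of $g(I_1)=I_1$ is exactly $1$ (so that $g|_1$ really equals $g$) --- both are immediate here, but worth a sentence. Either argument is complete and correct.
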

\begin{proof}Let $(S,s_0,t,o)$ be a transducer for~$f$, and consider the transducer $(S',s_0',t',o')$ defined as follows:
\begin{enumerate}
\item $S'$ is obtained from $S$ by adding a new state $s'_0$.\smallskip
\item $t'$ agrees with $t$ on $S\times\{0,1\}$, and satisfies $t(s'_0,0) = s_0$ and $t(s'_0,1) = s'_0$.\smallskip
\item $o'$ agrees with $o$ on $S\times\{0,1\}$, and satisfies $o(s'_0,0) = 0$ and $o(s'_0,1)=1$.
\end{enumerate}
It is easy to check that $(S',s_0',t',o')$ defines a rational homeomorphism $g$ of $\{0,1\}^\omega$, and that $g = (f,g)$.
\end{proof}

We say that an element $f\in\R$ has \newword{small support} if there exists a proper, nonempty clopen subset $E$ of $\{0,1\}^\omega$ such that $f$ is the identity on the complement of~$E$.  In this case, we say that $f$ is \newword{supported} on~$E$.

\begin{lemma}\label{lem:CommutatorLemma}Every element of $\R$ with small support lies in $[\R,\R]$.
\end{lemma}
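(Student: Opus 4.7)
The plan is to adapt the classical Anderson--Epstein commutator trick to $\R$, using the Hilbert Hotel element of Lemma~\ref{lem:HilbertHotel} in place of what would classically be an infinite product of shifted copies of $f$.

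First I would use flexibility (Proposition~\ref{prop:RationalGroupFlexible}) together with normality of $[\R,\R]$ to reduce to the case $f=(f_0,1)$ for some $f_0\in\R$, so that $f$ is supported on $I_0$. Applying Lemma~\ref{lem:HilbertHotel} to $f$ produces a $g\in\R$ with $g=(f,g)$, and unwinding the recursion shows $g$ acts as $f_0$ on each cone $I_{1^n 00}$ and is the identity on each $I_{1^n 01}$ and at $1^\omega$. From the defining relation $g=(f,g)$ one immediately computes
\[
g\cdot(1,g^{-1}) \;=\; (f,g)(1,g^{-1}) \;=\; (f,1),
\]
so it suffices to realize $(f,1)$ as a commutator in $\R$. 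Since $(f,1)$ is conjugate to $f$ by a prefix-rearrangement element of Thompson's group $V\subset\R$ (e.g.\ the map sending $00\omega\mapsto 0\omega$, $01\omega\mapsto 10\omega$, $1\omega\mapsto 11\omega$), normality of $[\R,\R]$ then yields $f\in[\R,\R]$.

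The crux is the construction of a $\tau\in\R$ satisfying $\tau g\tau^{-1}=(1,g)$; once this holds, $(f,1)=g(1,g)^{-1}=g\tau g^{-1}\tau^{-1}=[g,\tau]$. I would define $\tau$ by the rules
\[
\tau(1^n 00\omega) = 1^{n+1}00\omega,\quad \tau(01\omega)=00\omega,\quad \tau(1^n 01\omega)=1^{n-1}01\omega\ (n\geq 1),\quad \tau(1^\omega)=1^\omega,
\]
so that $\tau$ shifts the support cones $I_{1^n 00}$ of $g$ by one position while using the complementary (trivially acted on) $01$-cones to fill the missing $I_{00}$ in the image; continuity at $1^\omega$ is automatic because all cone sequences accumulate there. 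The identity $\tau g\tau^{-1}=(1,g)$ then reduces to a cone-by-cone verification, since both sides act as $f_0$ on $I_{1^m 00}$ for $m\geq 1$ and are trivial elsewhere.

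The main obstacle I anticipate is verifying that this $\tau$ actually lies in $\R$, i.e.\ that it is realized by a finite-state asynchronous transducer. I expect roughly five states to suffice: an initial state; a delay state entered after reading a lone leading $0$; a state that runs through a block of leading $1$'s while holding one $1$ in reserve; a decision state entered on the first $0$ past the $1$'s, which emits two $1$'s followed by $00$ in the $00$-continuation and emits $01$ in the $01$-continuation; and a terminal identity state. Once $\tau\in\R$ is in hand, Lemma~\ref{lem:HilbertHotel} supplies $g$, the commutator identity $(f,1)=[g,\tau]$ places $(f,1)$ in $[\R,\R]$, and conjugation by a Thompson element together with normality transfers this to $f$, completing the proof.
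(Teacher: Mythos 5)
Your argument is correct, and at the strategic level it is the same Anderson-style trick the paper uses: produce the infinitely iterated element via Lemma~\ref{lem:HilbertHotel} and exhibit $f$ as a commutator of that element with a ``shift.'' The difference is in how the shift is realized. You normalize $f$ to be supported on $I_0$, so the conjugator you need must satisfy $\tau g\tau^{-1}=(1,g)$ where $g$ is supported on the infinite family of cones $I_{1^n00}$; the resulting $\tau$ is not a finite prefix replacement, so it does not lie in Thompson's $V$ and you must separately verify $\tau\in\R$ (your five-state transducer does work, and one could alternatively check that $\tau$ has finitely many restrictions and invoke Theorem~\ref{thm:FinitelyManyRestrictions}). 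The paper instead conjugates $f$ one level deeper, onto $I_{01}$, writing $f=\bigl((1,g),1\bigr)$ and setting $k=(1,h)$ with $h=(g,h)$; that extra level of nesting is exactly what allows the shift to be performed by the single Thompson generator $x_0$, via $x_0^{-1}\bigl(1,(g,h)\bigr)x_0=\bigl((1,g),h\bigr)=fk$, so no new rational element has to be built and no final conjugation from $(f,1)$ back to $f$ is needed. Your route costs an extra transducer construction but has the minor virtue of making the ``infinite shifted product'' $g=(f,(f,(f,\ldots)))$ and the identity $g\,(1,g)^{-1}=(f,1)$ completely explicit; the paper's choice of $I_{01}$ is the slicker packaging of the same idea.
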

\begin{proof} Let $f\in\R$ have small support.  Since $\R$ is flexible by Proposition~\ref{prop:RationalGroupFlexible}, we can conjugate $f$ by an element of $\mathcal{R}$ so that it is supported on~$I_{01}$. Then
\[
f = \bigl((1,g),1\bigr)
\]
for some $g\in\R$. By Lemma~\ref{lem:HilbertHotel}, there exists an $h\in\R$ that satisfies the equation
\[
h = (g,h).
\]
Let $k = (1,h)$, and let $x_0\in\R$ be the first generator for Thompson's group~$F$, i.e.~the homeomorphism $x_0\colon\{0,1\}^\omega\to\{0,1\}^\omega$ satisfying
\[
x_0(00\zeta) = 0\zeta,\qquad x_0(01\zeta) = 10\zeta,\qquad x_0(1\zeta)=11\zeta
\]
for all $\zeta\in\{0,1\}^\omega$.
(A trandsucer for $x_0$ is shown in Figure~\ref{fig:Transducerx0}.) Then
\begin{figure}[b]
\centering
\qquad\includegraphics{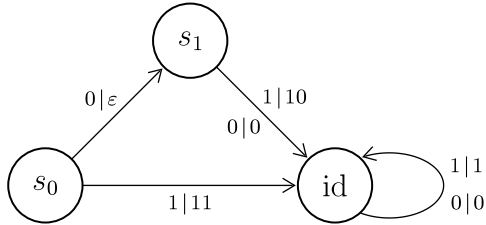}
\caption{The transducer for the element~$x_0$.}
\label{fig:Transducerx0}
\end{figure}
\[
x_0^{-1}k x_0 = x_0^{-1}(1,h)x_0 = x_0^{-1}\bigl(1,(g,h)\bigr)x_0 \,=\, \bigl((1,g),h\bigr) \,=\, fk
\]
and therefore $f = x_0^{-1}k x_0k^{-1}$.
\end{proof}

\begin{proposition}\label{prop:SmallSupportGenerate}The elements of small support generate $\R$, and therefore~$\R=[\R,\R]$.
\end{proposition}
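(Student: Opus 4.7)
The identity $\R = [\R,\R]$ will follow from the first assertion together with Lemma~\ref{lem:CommutatorLemma}: every small-support element lies in $[\R,\R]$, so once such elements are shown to generate $\R$, we get $\R \subseteq [\R,\R]$ and hence $\R = [\R,\R]$.

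To prove the first assertion, my plan is to factor any nonidentity $f \in \R$ as a product of two small-support elements (the case $f = \mathrm{id}$ being trivial). I will pick a point $x \in \{0,1\}^\omega$ with $f(x) \ne x$, and use continuity of $f$ to choose finite prefixes $\gamma$ of $x$ and $\beta$ of $f(x)$ with $|\gamma|, |\beta| \ge 2$, $I_\gamma \cap I_\beta = \emptyset$, and $f(I_\gamma) \subseteq I_\beta$. Setting $E := I_\gamma$, the sets $E$ and $f(E)$ will be disjoint proper nonempty clopen subsets, and since $I_\gamma \cup I_\beta$ cannot cover $\{0,1\}^\omega$ when $|\gamma|, |\beta| \ge 2$, the set $F := \{0,1\}^\omega \setminus (E \cup f(E))$ will also be proper and nonempty. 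So $\{E, f(E), F\}$ will form a clopen partition of $\{0,1\}^\omega$ into three proper nonempty pieces.

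I then plan to define $f_2 \in \R$ as the involution that swaps $E$ and $f(E)$ via $f$: namely, $f_2$ acts as $f$ on $E$, as $f^{-1}$ on $f(E)$, and as the identity on $F$. On each piece $f_2$ locally agrees with $f$, $f^{-1}$, or $\mathrm{id}$, so Proposition~\ref{prop:RationalGroupFull} (fullness) will give $f_2 \in \R$; and $f_2$ will have small support because it is the identity on $F$. Setting $f_1 := f f_2^{-1} = f f_2$ (using $f_2^2 = \mathrm{id}$), a direct piecewise check yields $f_1(y) = f(f^{-1}(y)) = y$ for $y \in f(E)$, so $f_1$ will also have small support (identity on the proper nonempty clopen $f(E)$). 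This gives $f = f_1 f_2$, as required.

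The main subtlety will be choosing $\gamma$ and $\beta$ satisfying all three conditions simultaneously; this reduces to a continuity-and-separation argument at $x$, exploiting the fact that any two disjoint basic cones of depth at least $2$ fail to cover $\{0,1\}^\omega$. Once $E$, $f(E)$, and $F$ are in place, the construction of $f_2$ and the verification that $f = f_1 f_2$ are routine.
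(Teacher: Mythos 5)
Your proposal is correct and takes essentially the same approach as the paper: both factor a non-identity $f$ as a product of the clopen ``swap'' involution (which lies in $\R$ by fullness and is supported on $E\cup f(E)$) and a remaining element that is the identity on $E$ or $f(E)$, then invoke Lemma~\ref{lem:CommutatorLemma}. The only difference is that you spell out the continuity argument producing the clopen set $E$ with $E$, $f(E)$ disjoint and $E\cup f(E)$ proper, which the paper simply asserts.
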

\begin{proof}Let $f$ be any non-identity element in $\R$.  Then there exists a nonempty clopen set $E\subseteq\{0,1\}^\omega$ such that $f(E)$ is disjoint from~$E$ and $E\cup f(E)$ is not the whole Cantor set. Let $g$ be the homeomorphism of $\{0,1\}^\omega$ that agrees with $f$ on~$E$, agrees with $f^{-1}$ on~$f(E)$, and is the identity elsewhere.  Since $\mathcal{R}$ is full by Proposition~\ref{prop:RationalGroupFull}, we know that $g\in\mathcal{R}$.  Then $g$ is supported on $E\cup f(E)$, and $gf$ is the identity on~$E$, so $g$ and $gf$ have small support.  Then $f=g^{-1}(gf)$ is a product of elements of small support, and is therefore in $[\R,\R]$ by Lemma~\ref{lem:CommutatorLemma}.
\end{proof}

\begin{theorem}\label{thm:IsSimple}$\R$ is simple.
\end{theorem}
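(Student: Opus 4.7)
The plan is to follow Epstein's outline: given a nontrivial normal subgroup $N\triangleleft\R$, I will show that $N$ contains every element of small support, which by Proposition~\ref{prop:SmallSupportGenerate} forces $N=\R$. Pick $1\neq n\in N$ and a proper clopen set $U\subseteq\{0,1\}^\omega$ small enough that $U\cap n(U)=\emptyset$, and moreover $U\cap n^2(U)=\emptyset$ whenever $n^2\neq 1$; such a $U$ exists as a sufficiently small clopen neighborhood of a point $p$ with $n(p)\neq p$ (and, when $n^2\neq 1$, also $n^2(p)\neq p$).

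For any $g,g'\in\R$ supported in $U$, the normality of $N$ gives $[g,n]\in N$ and hence $\bigl[[g,n],g'\bigr]\in N$. A direct orbit-disjointness computation shows that $[g,n]$ is supported in $U\cup n^{-1}(U)$ (or $U\cup n(U)$, in the involution case) and restricts to $g^{-1}$ on $U$, and that the double commutator $\bigl[[g,n],g'\bigr]$ is supported entirely in $U$, where it agrees with $gg'^{-1}g^{-1}g'=[g,g'^{-1}]$. Writing $\R_U=\{f\in\R:\supp(f)\subseteq U\}$, this shows $N\supseteq[\R_U,\R_U]$. Since $\R_U\cong\R$ (by restriction $f\mapsto f|_\alpha$ when $U=I_\alpha$ is a basic clopen, and otherwise after first conjugating $U$ to a basic clopen via an element of Thompson's group $V\subseteq\R$) and $\R=[\R,\R]$ by Proposition~\ref{prop:SmallSupportGenerate}, it follows that $\R_U=[\R_U,\R_U]\subseteq N$. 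Finally, by flexibility (Proposition~\ref{prop:RationalGroupFlexible}) every small-support element of $\R$ is $\R$-conjugate to an element of $\R_U$, and normality then places every such element in $N$.

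The main obstacle will be the orbit-disjointness bookkeeping in the double-commutator computation. Verifying that conjugation by $g'$ has no effect on the piece of $[g,n]$ living outside $U$ requires that $U$ and its $n^{\pm1}$- and $n^{\pm2}$-translates be sufficiently disjoint. When $n$ is a nontrivial involution only $U\cap n(U)=\emptyset$ is available; in that case a parallel computation using $n(U)$ in place of $n^{-1}(U)$ handles things, with the relation $n^2=1$ supplying the cancellation needed on $n(U)$.
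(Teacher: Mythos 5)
Your proposal is correct and follows essentially the same route as the paper: Epstein's double-commutator trick applied to a clopen set moved off itself by the nontrivial normal element, combined with $\R=[\R,\R]$ (Proposition~\ref{prop:SmallSupportGenerate}), flexibility, and the fact that small-support elements generate (the paper just works concretely with $U=I_0$ and the $(g,1)$ notation rather than a general $U$ and the isomorphism $\R_U\cong\R$). One small remark: the disjointness $U\cap n(U)=\emptyset$ alone already makes the double-commutator computation go through, so the extra condition on $n^2(U)$ and the separate involution case are unnecessary.
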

\begin{proof}Let $N$ be a nontrivial normal subgroup of $\R$, and let $f_0$ be a nontrivial element of~$N$.  Then there exists a nonempty clopen set $E\subseteq\{0,1\}^\omega$ such that $f_0(E)$ is disjoint from~$E$.  Since $\mathcal{R}$ is flexible by Proposition~\ref{prop:RationalGroupFlexible}, there exists a $c\in\mathcal{R}$ so that $c(I_0)\subseteq E$. Then $f_1 = c^{-1}f_0c$ is an element of~$N$ and has the property that $f_1(I_0)$ is disjoint from~$I_0$.

Let $g,h\in\R$.  Then $f_1(g,1)^{-1}f_1^{-1}$ is supported on~$f(I_0)$, so the element
\[
g' = (g,1)f_1(g,1)^{-1}f_1^{-1}
\]
of~$N$ agrees with $(g,1)$ on~$I_0$.  It follows that $\bigl[g',(h,1)\bigl] = \bigl([g,h],1\bigr)$, so the latter is in $N$.

Since $[\R,\R] = \R$ by Proposition~\ref{prop:SmallSupportGenerate}, it follows that $(k,1)\in N$ for all $k\in\R$.   Since $\mathcal{R}$ is flexible, we can conjugate by elements of~$\mathcal{R}$ to deduce that every element of $\R$ with small support lies in~$N$. But such elements generate~$\R$, and therefore~$N=\R$.
\end{proof}

Note that the argument in Lemma~\ref{lem:CommutatorLemma}, Proposition~\ref{prop:SmallSupportGenerate}, and Theorem~\ref{thm:IsSimple} applies to many other groups as well. Indeed, we have proven the following.

\begin{theorem}Let $G$ be a full, flexible group of homeomorphisms of the Cantor set.  Suppose that:
\begin{enumerate}
\item For all $g\in G$ both $(1,g)$ and $(g,1)$ lie in $G$, and every element of $G$ supported on $I_0$ or $I_1$ has this form,\smallskip
\item For all $g\in G$ there exists an $h\in G$ so that $h = (g,h)$, and\smallskip
\item The first generator $x_0$ of Thompson's group $F$ lies in~$G$.
\end{enumerate}
Then $G$ is simple.\hfill\qedsymbol
\end{theorem}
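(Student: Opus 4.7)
The plan is to retrace the arguments of Lemma~\ref{lem:CommutatorLemma}, Proposition~\ref{prop:SmallSupportGenerate}, and Theorem~\ref{thm:IsSimple} word-for-word, checking at each step that the ingredients used are available purely from the three hypotheses plus fullness and flexibility. The three hypotheses are precisely what the original proof for $\R$ relied on from the structure of $\R$ beyond those two abstract properties: condition (1) gives the existence and characterization of elements of the form $(g,1)$ and $(1,g)$; condition (2) is the ``Hilbert hotel'' lemma replacement; and condition (3) gives the Thompson element $x_0$ used in the key commutator identity.

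First I would establish the analog of Lemma~\ref{lem:CommutatorLemma}: every element of $G$ with small support lies in $[G,G]$. Given such an $f$, flexibility lets me conjugate it (within $G$) to be supported on $I_{01}$; condition (1) then writes $f = ((1,g),1)$ for some $g \in G$; condition (2) produces an $h \in G$ with $h = (g,h)$; condition (1) again gives $k = (1,h) \in G$; and condition (3) provides $x_0 \in G$. The same calculation
\[
x_0^{-1} k x_0 \,=\, x_0^{-1}(1,h) x_0 \,=\, x_0^{-1}\bigl(1,(g,h)\bigr) x_0 \,=\, \bigl((1,g), h\bigr) \,=\, f k
\]
then yields $f = x_0^{-1} k x_0 k^{-1} \in [G,G]$.

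Next I would copy the proof of Proposition~\ref{prop:SmallSupportGenerate}: given a non-identity $f \in G$, pick a nonempty clopen $E$ with $f(E) \cap E = \emptyset$ and $E \cup f(E) \neq \{0,1\}^\omega$, let $g$ be the homeomorphism agreeing with $f$ on $E$, with $f^{-1}$ on $f(E)$, and with the identity elsewhere; fullness of $G$ places $g \in G$, both $g$ and $gf$ have small support, and the factorization $f = g^{-1}(gf)$ shows the small-support elements generate $G$. Combining with the preceding paragraph, $G = [G,G]$.

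Finally, for simplicity itself, I would mimic the proof of Theorem~\ref{thm:IsSimple}. Take a nontrivial normal subgroup $N \trianglelefteq G$ and a nontrivial $f_0 \in N$; choose a nonempty clopen $E$ with $f_0(E) \cap E = \emptyset$, use flexibility to find $c \in G$ with $c(I_0) \subseteq E$, and replace $f_0$ by $f_1 = c^{-1} f_0 c \in N$, which satisfies $f_1(I_0) \cap I_0 = \emptyset$. For any $g, h \in G$, condition (1) makes $(g,1), (h,1) \in G$, and the element $g' = (g,1) f_1 (g,1)^{-1} f_1^{-1} \in N$ agrees with $(g,1)$ on $I_0$ (since $f_1 (g,1)^{-1} f_1^{-1}$ is supported away from $I_0$), hence $[g', (h,1)] = ([g,h], 1) \in N$. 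Because $G = [G,G]$, every $(k,1)$ with $k \in G$ lies in $N$; flexibility then conjugates these into every small-support element of $G$, and since those generate $G$ we conclude $N = G$. The only step requiring any real thought is verifying that the abstract assumptions suffice in the Hilbert-hotel step, which is exactly what hypothesis (2) was built to guarantee, so there is no genuine obstacle.
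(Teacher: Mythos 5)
Your proposal is correct and is exactly the proof the paper intends: the paper derives this theorem by observing that the arguments of Lemma~\ref{lem:CommutatorLemma}, Proposition~\ref{prop:SmallSupportGenerate}, and Theorem~\ref{thm:IsSimple} use nothing about $\R$ beyond fullness, flexibility, and the three listed hypotheses, which is precisely the verification you carry out. No discrepancies to report.
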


\noindent Groups to which this theorem applies include:
\begin{itemize}
\item The group of all homeomorphisms of the Cantor set~(see~\cite{And}).\smallskip
\item The group of all computable homeomorphisms of the Cantor set.\smallskip
\item The group of all bilipschitz homeomorphisms (or computable bilipschitz homeomorphisms) of the Cantor set.\smallskip
\item The group of all bilipschitz elements of~$\mathcal{R}$.
\end{itemize}
Note that the group of bilipschitz elements of~$\R$ is a proper subgroup of~$\R$, since for example it does not contain the homeomorphism whose transducer is shown in Figure~\ref{fig:TransducerExample}.

Incidentally, an alternative to the proof of Theorem~\ref{thm:IsSimple} above is the following remarkable theorem:

\begin{MatuiTheorem} Let $G$ be a full, flexible group of homeomorphisms of the Cantor set.  Then\/ $[G,G]$ is simple.
\end{MatuiTheorem}
\begin{proof}See \cite[Theorem~4.16]{Matui}. The word ``flexible'' does not appear in Matui's work, but Matui does prove in \cite[Proposition~4.11]{Matui} that the full group of an essentially principal \'{e}tale groupoid is purely infinite and minimal if and only if the associated full group is flexible as defined above. Technically, Matui assumes that the \'{e}tale groupoid of germs is Hausdorff, but nothing in his proof requires this condition. \end{proof}

In addition to the groups considered above, this theorem applies to many groups of interest, including the full homeomorphism group of the Cantor set, Thompson's groups~$V$ as well as the generalized groups $V_{n,r}$ (not all of which are simple), R\"{o}ver's group $V\mathcal{G}$ (see~\cite{Rover}), and Brin's higher-dimensional Thompson groups~$nV$ (see~\cite{Brin}).

\section{Finite Generation}

For each prime $p$, let $f_p$ be the element of $\mathcal{R}$ defined by the transducer shown in Figure~\ref{fig:TransducerP}.  This homeomorphism switches every $p$'th digit of a binary sequence, leaving the remaining digits unchanged.  The goal of this section is to prove the following theorem.
\begin{figure}[t]
\centering
\includegraphics{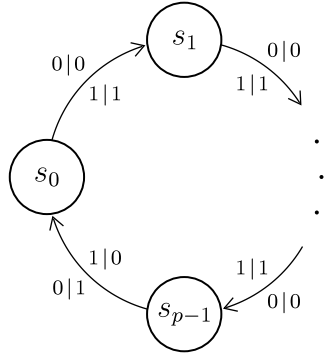}
\caption{The transducer for~$f_p$.}
\label{fig:TransducerP}
\end{figure}

\begin{theorem}\label{thm:NotFG}Let $\mathcal{M}$ be any submonoid of $\mathcal{R}$ that contains $f_p$ for infinitely many primes~$p$.  Then $\mathcal{M}$ is not finitely generated.
\end{theorem}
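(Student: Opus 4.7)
The plan is to associate to each rational homeomorphism $g\in\R$ a finite set $P(g)$ of primes that is well-behaved under composition, in the sense that $p\in P(f_p)$ for every prime~$p$ and $P(gh)\subseteq P(g)\cup P(h)$ for all $g,h\in\R$.  Granted these two facts, the theorem is immediate: if $\mathcal{M}$ were generated as a submonoid by $g_1,\ldots,g_n$, then every $f\in\mathcal{M}$ would satisfy $P(f)\subseteq P(g_1)\cup\cdots\cup P(g_n)$, a fixed finite set of primes, contradicting the assumption that $\mathcal{M}$ contains $f_p$ for infinitely many primes~$p$.

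To define $P(g)$, I use the minimum transducer $(S,s_0,t,o)$ of $g$; its state set $S$ may be identified with the set of distinct restrictions of~$g$ and is finite by Theorem~\ref{thm:FinitelyManyRestrictions}.  For each $w\in\{0,1\}^\tast$, let $T_w\colon S\to S$ denote the endofunction $s\mapsto t(s,w)$, and set
\[
P(g) \;=\; \bigl\{p\text{ prime} \,:\, p\mid\ell\text{ for some cycle length }\ell\text{ of some }T_w,\, w\in\{0,1\}^\tast\bigr\}.
\]
Every cycle of an endofunction of $S$ has length at most $|S|$, so $P(g)$ lies within the primes at most~$|S|$ and is therefore finite.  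For $g=f_p$, the minimum transducer has exactly $p$ states indexed by residues modulo~$p$, and both $T_0$ and $T_1$ act as cyclic permutations of order~$p$, so $p\in P(f_p)$.

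The main obstacle is the multiplicativity property $P(gh)\subseteq P(g)\cup P(h)$, and my strategy is to exploit the natural product transducer of $gh=g\circ h$.  This has state set $S_g\times S_h$, and on input digit $d$ from the state $(s_g,s_h)$ it transitions to $(t_g(s_g,o_h(s_h,d)),\,t_h(s_h,d))$ and outputs $o_g(s_g,o_h(s_h,d))$; a direct check confirms that it realises the homeomorphism~$gh$.  The minimum transducer of $gh$ is a quotient of this product transducer, obtained by identifying pairs that give the same restriction of~$gh$, and since this quotient intertwines the transition functions, for every word $w$ each cycle of $T_w$ in the minimum transducer of $gh$ of length $k$ lifts to a cycle of $T_w$ in the product transducer whose length is a multiple of~$k$.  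Thus it suffices to show that every prime dividing a cycle length of $T_w$ in the product transducer lies in $P(g)\cup P(h)$.

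Iterating the reading of a fixed word $w$ in the product transducer, the $h$-coordinate evolves under $T_w^{(h)}$ independently of the $g$-coordinate.  If $(s_g,s_h)$ lies on a cycle of this iteration, then $s_h$ lies on a cycle of $T_w^{(h)}$ of some length $p_h$, and the word $B$ output by $h$ in one full traversal of this cycle is fixed.  The $g$-coordinate at the return times of $s_h$ then evolves under $T_B^{(g)}$, so the cycle length of $(s_g,s_h)$ in the product transducer equals $p_h\cdot q$, where $q$ is the cycle length of $s_g$ under $T_B^{(g)}$.  By construction, every prime dividing~$p_h$ lies in $P(h)$ and every prime dividing~$q$ lies in $P(g)$, so every prime factor of $p_h\cdot q$ lies in $P(g)\cup P(h)$, as required.
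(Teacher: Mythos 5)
Your argument is correct, but it takes a genuinely different route from the paper's. The paper's invariant is the existential property of being oblivious to $p$ --- having \emph{some} transducer with an accessible cycle whose length is not a multiple of~$p$. Because that quantifies over all transducers, the paper must prove the nontrivial Lemma~\ref{lem:fpIsNotOblivious} that \emph{every} transducer for $f_p$ has all accessible cycle lengths divisible by~$p$ (a pumping argument), and its closure property (Lemma~\ref{lem:ObliviousProduct}) holds only when one factor has fewer than $p$ states, which forces the stratification $\mathcal{M}_{\leq 1}\subseteq\mathcal{M}_{\leq 2}\subseteq\cdots$ by transducer size. You instead anchor the invariant to the canonical minimal transducer and measure cycles of the iterated transition maps $T_w$ in $w$-steps over all words~$w$, obtaining a finite prime set $P(g)$ with the unconditional containment $P(gh)\subseteq P(g)\cup P(h)$; this makes $p\in P(f_p)$ a trivial computation and collapses the stratification to a one-line finiteness argument. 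What you pay for this is the quotient step: you need that the accessible part of the product of minimal transducers maps onto the minimal transducer of $gh$ compatibly with transitions (true, because a product state determines the continuation map and hence the restriction of $gh$), and that a $k$-cycle downstairs lifts to a cycle upstairs whose length is a multiple of $k$ (true, by iterating $T_w^k$ on an accessible preimage until it becomes periodic). Both of these are asserted rather than proved in your write-up and deserve a few lines; it is also worth flagging explicitly that quantifying $P(g)$ over all words $w$, not just single letters, is essential, since the words $B$ fed to the $g$-coordinate are outputs of $h$ and may be long or empty. Both proofs yield the same strengthened conclusion about submonoids rather than just subgroups.
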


Since $\mathcal{R}$ is itself such a submonoid, it follows from this theorem that $\mathcal{R}$ is not finitely generated.  (Note that a group is finitely generated if and only if it is finitely generated as a monoid.)  This also proves that various other subgroups of $\mathcal{R}$ are not finitely generated, such as:
\begin{enumerate}
\item The subgroup of $\R$ generated by Thompson's group~$V$ and all of the~$f_p$.\smallskip
\item The subgroup of all measure-preserving elements of~$\mathcal{R}$.\smallskip
\item The subgroup of $\R$ generated by Thompson's group $V$ and all synchronous rational functions.\smallskip
\item The subgroup of all bilipschitz elements of~$\R$.
\end{enumerate}
Note that (3) is a proper subgroup of (4).  For example, the element $f\in\R$ satisfying $f=(x_0,f)$ (where $x_0$ is the element shown in Figure~\ref{fig:Transducerx0}) is bilipschitz but does not lie in~(2).

We now turn to the proof of Theorem~\ref{thm:NotFG}.  Given a binary asynchronous transducer $T = (S,s_0,t,o)$, recall that a state $s\in S$ is \newword{accessible} if there exists an $\alpha\in \{0,1\}^\tast$ such that
\[
t(s_0,\alpha) = s.
\]
A \newword{cycle} in $T$ is an ordered pair $(c,\gamma)$, where $c\in S$ and $\gamma\in\{0,1\}^\tast$ is a nonempty binary sequence satisfying
\[
t(c,\gamma) = c.
\]
Such a cycle corresponds to a directed cycle of edges in the state diagram for~$T$. The length $|\gamma|$ of $\gamma$ is called the \newword{length} of the cycle.  A cycle $(c,\gamma)$ is \newword{accessible} if $c$ is an accessible state.

\begin{definition}Let $p$ be a prime.
\begin{enumerate}
\item We say that a transducer $T$ is \newword{oblivious} to $p$ if there exists an accessible cycle in $T$ whose length is not a multiple of~$p$.\smallskip
\item We say that a rational homeomorphism $f\in\R$ is \newword{oblivious} to $p$ if there exists a transducer for $f$  that is oblivious to~$p$.
\end{enumerate}
\end{definition}

Note that any transducer with fewer than $p$ states is automatically oblivious to~$p$.\pagebreak

\begin{lemma}\label{lem:fpIsNotOblivious}If $p$ is prime, then $f_p$ is not oblivious to $p$.
\end{lemma}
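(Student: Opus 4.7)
My plan is to assume for contradiction that some transducer $T=(S,s_0,t,o)$ computing $f_p$ has an accessible cycle $(c,\gamma)$ with $\ell := |\gamma|$ not divisible by $p$, and push the structural consequences of this until I reach an impossible identity. Fix $\alpha$ with $t(s_0,\alpha)=c$ and set $a=|\alpha|$, $\beta=o(s_0,\alpha)$, $b=|\beta|$, $\mu=o(c,\gamma)$, $m=|\mu|$. Because $T$ realizes $f_p$, for every $\omega\in\{0,1\}^\omega$ we have
\[
f_p(\alpha\omega) \,=\, \beta\, o(c,\omega)
\qquad\text{and}\qquad
f_p(\alpha\gamma\omega) \,=\, \beta\mu\, o(c,\omega),
\]
which together yield the identity $f_p(\alpha\gamma\omega)[n]=f_p(\alpha\omega)[n-m]$ for every $n>b+m$. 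I will combine this with the bit-wise formula $f_p(\psi)[n]=\psi[n]\oplus E[n]$, where $E[n]=1$ exactly when $p\mid n$.

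The first step is to establish $b\le a$ and $m\le\ell$. Since $\beta$ is the fixed length-$b$ prefix of $f_p(\alpha\omega)=(\alpha\omega)\oplus E$ for every $\omega$ and $f_p$ is bit-wise, the first $b$ bits of $\alpha\omega$ must be independent of $\omega$, which forces $b\le a$. Repeating this with $\alpha\gamma^k$ in place of $\alpha$ yields $b+km\le a+k\ell$ for every $k\ge 0$, and hence $m\le\ell$.

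The main step is to substitute the bit-wise formula for $f_p$ into $f_p(\alpha\gamma\omega)[n]=f_p(\alpha\omega)[n-m]$, which gives
\[
(\alpha\gamma\omega)[n] \,\oplus\, (\alpha\omega)[n-m] \,=\, E[n]\oplus E[n-m]
\]
for $n>b+m$. For $n>a+\ell$ the two left-hand entries fall inside the $\omega$-tail of the input and equal $\omega[n-a-\ell]$ and $\omega[n-m-a]$; both indices are positive since $m\le\ell$. The right side is a fixed function of $n$, so the left side must be constant as $\omega$ varies. If the two indices $n-a-\ell$ and $n-m-a$ were distinct, one could flip the corresponding bits of $\omega$ independently and change the left side, so the two indices must coincide, giving $m=\ell$.

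With $m=\ell$ the equation collapses to $E[n]=E[n-\ell]$ for every $n>a+\ell$. Thus $\ell$ is a period of the globally periodic sequence $E$, and since $E$ has minimal period $p$ this forces $p\mid\ell$, contradicting the hypothesis. I expect the most delicate step to be the first one: without the inequality $m\le\ell$ the index $n-m-a$ in the main step need not land in the $\omega$-portion of the input, and the constant-in-$\omega$ argument would not apply.
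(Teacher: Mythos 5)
Your proof is correct, and it takes a genuinely different route from the paper's, although both hinge on the same core move of comparing the computation with and without one traversal of the accessible cycle. The paper avoids all of your length bookkeeping by feeding in a single well-chosen input: it pads with a word $\lambda$ so that $|\alpha|+|\gamma|+|\lambda|\equiv 0\pmod p$ and appends the tail $(0^{p-1}1)^\omega$, so that the output $f_p(\alpha\gamma\lambda\rho^\omega)$ is eventually $0^\omega$; deleting the trip around the cycle leaves the suffix of the output computed from state $c$ unchanged, so $f_p(\alpha\lambda\rho^\omega)$ is also eventually $0^\omega$, which forces $|\alpha|+|\lambda|\equiv 0\pmod p$ and hence $p\mid|\gamma|$. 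You instead quantify over all inputs $\omega$ and exploit the bitwise description $f_p(\psi)[n]=\psi[n]\oplus E[n]$ to squeeze out two structural facts: first that $|o(c,\gamma)|=|\gamma|$ (any transducer for $f_p$ is length-preserving around accessible cycles -- this is the content of your steps $b\le a$, $m\le\ell$, $m=\ell$, and your identification of the delicate point is apt), and second that $|\gamma|$ is an eventual period of the mask $E$, hence a multiple of $p$. The paper's argument is shorter and needs no a priori control on output lengths; yours costs a little more bookkeeping but yields the reusable intermediate fact about length preservation and makes transparent exactly which property of $f_p$ is used, namely that it is a bitwise XOR with a sequence of minimal (eventual) period $p$.
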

\begin{proof}
Let $T=(S,s_0,t,o)$ be any transducer for~$f$, and let $(c,\gamma)$ be an accessible cycle for~$T$. Since $c$ is accessible, there exists an $\alpha\in\{0,1\}^\tast$ so that $t(s_0,\alpha)=c$. Fix a $\lambda \in \{0,1\}^\tast$ so that $|\alpha| + |\gamma| + |\lambda|$ is a multiple of $p$, and let $\rho = 0^{p-1}1$.  Then the infinite binary sequence
\[
\alpha\gamma\lambda\rho^\omega
\]
eventually has $1$'s in positions that are multiples of~$p$, so $f_p(\alpha\gamma\lambda\rho^\omega)$ ends in an infinite sequence of~$0$'s.  In particular,
\[
f_p(\alpha\gamma\lambda\rho^\omega) = \beta\delta\mu 0^\omega
\]
where $\beta = o(s_0,\alpha)$, $\delta = o(c,\gamma)$, and $o(c,\lambda\rho^\omega) = \mu 0^\omega$ for some finite binary sequence $\mu\in\{0,1\}^\tast$.  If we now eliminate the trip around the cycle, we see that
\[
f_p(\alpha\lambda\rho^\omega) = \beta\mu 0^\omega.
\]
This ends with an infinite sequence~$0$'s, so $\alpha\lambda\rho^\omega$ must eventually have $1$'s in positions that are multiples of~$p$.  Then $|\alpha|+|\lambda|$ must be a multiple of~$p$, and therefore $|\gamma|$ is a multiple of~$p$.  Since the cycle $(c,\gamma)$ was arbitrary, we conclude that~$T$ is not oblivious to~$p$.  Since $T$ was arbitrary, it follows that $f_p$ is not oblivious to~$p$.
\end{proof}

\begin{lemma}\label{lem:ObliviousProduct}Let $p$ be a prime and let $f,f'\in\mathcal{R}$.  If $f$ is oblivious to $p$ and $f'$ has a transducer with fewer than $p$ states, then $f'f$ is oblivious to~$p$.
\end{lemma}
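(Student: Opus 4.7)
The plan is to build a product transducer for $f'f$ on the state set $S \times S'$, and then, starting from an accessible copy of $c$, iterate the cycle $\gamma$ until the $S'$-component also returns to an earlier value, which must happen within $p$ iterations by pigeonhole.

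In detail, let $T = (S, s_0, t, o)$ be a transducer for $f$ with an accessible cycle $(c, \gamma)$ whose length is not divisible by $p$, and let $T' = (S', s'_0, t', o')$ be a transducer for $f'$ with $|S'| < p$. Form the product transducer $T''$ on states $S \times S'$ with initial state $(s_0, s'_0)$, transition
\[
t''\bigl((s, s'), \sigma\bigr) = \bigl(t(s, \sigma),\, t'(s', o(s, \sigma))\bigr),
\]
and output $o''\bigl((s, s'), \sigma\bigr) = o'(s', o(s, \sigma))$. A short induction on input length, using the multiplicativity of the extended output functions recorded in the preliminaries, shows that $T''$ computes $f'f$.

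Pick $\alpha \in \{0,1\}^\tast$ with $t(s_0, \alpha) = c$, and set $s' = t'(s'_0, o(s_0, \alpha))$, so $(c, s')$ is accessible in $T''$. Write $\delta = o(c, \gamma)$. Looping around $\gamma$ from $(c, s')$, after $k$ iterations the $S$-coordinate returns to $c$ while the $S'$-coordinate becomes $t'(s', \delta^k)$. The $|S'| + 1$ values $t'(s', \delta^k)$ for $k = 0, \ldots, |S'|$ lie in the $|S'|$-element set $S'$, so by pigeonhole there exist $0 \le i < j \le |S'|$ with $t'(s', \delta^i) = t'(s', \delta^j)$. Then $(c,\, t'(s', \delta^i))$ is accessible in $T''$, and reading $\gamma^{j-i}$ from it returns to itself, giving an accessible cycle in $T''$ of length $(j - i)|\gamma|$.

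Since $1 \le j - i \le |S'| < p$ and $p \nmid |\gamma|$, primality of $p$ forces $p \nmid (j - i)|\gamma|$, so $T''$ witnesses that $f'f$ is oblivious to $p$. The only real hazard is checking the product construction in the asynchronous setting, where $o(s, \sigma)$ may be the empty string; once one confirms that feeding the (possibly empty) $f$-outputs through $T'$ respects concatenation, the rest is pigeonhole on a clean product of transducers.
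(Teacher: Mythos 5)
Your proposal is correct and follows essentially the same route as the paper: the same product transducer on $S\times S'$, the same pigeonhole argument on the $S'$-coordinate after repeated trips around the cycle $\gamma$, and the same use of primality to conclude that $(j-i)\length{\gamma}$ is not a multiple of $p$. The only cosmetic difference is that you base the pigeonhole at the state $s'=t'(s_0',o(s_0,\alpha))$ rather than at $s_0'$, which amounts to the same computation.
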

\begin{proof}Let $T=(S,s_0,t,o)$ and $T'=(S',s_0',t',o')$ be transducers for $f$ and~$f'$, where $T$ is oblivious to $p$ and $T'$ has fewer than $p$ states.  Then there exists a transducer $T''$ for $f'f$ with state set $S\times S'$ and initial state $(s_0,s_0')$ whose transition and output functions $t'',o''$ satisfy
\[
t''\bigl((s,s'),\alpha\bigr) = \bigl(t(s,\alpha),t'(s',\beta)\bigr)
\qquad\text{and}\qquad
o''\bigl((s,s'),\alpha\bigr) = o'(s',\beta)
\]
for all $\alpha\in\{0,1\}^\tast$, where $\beta = o(s,\alpha)$.

Now, since $T$ is oblivious to $p$, there exists an accessible cycle $(c,\gamma)$ for~$T$ whose length is not a multiple of~$p$. Since $c$ is accessible, there exists an $\alpha\in\{0,1\}^\tast$ such that
$t(s_0,\alpha) = c$. Then
\[
t(s_0,\alpha\gamma^n) = c
\]
for all $n\geq 0$. Let $\beta = o(s_0,\alpha)$ and $\delta = o(c,\gamma)$, so
\[
o(s_0,\alpha\gamma^n) = \beta\delta^n
\]
for all $n\geq 0$.  Since $T'$ has fewer than $p$ states, by the pigeonhole principle there exist numbers $j,k$ with $0\leq j < k < p$ such that
\[
t'(s'_0,\beta\delta^j) = t'(s'_0,\beta\delta^k).
\]
Let $c' = t'(s_0',\beta\delta^j)$, and observe that
\[
t'(c',\delta^{k-j}) = c'.
\]
Then $(c,c')$ is an accessible state for $T''$ since
\[
t''\bigl((s_0,s_0'),\alpha\gamma^j\bigr) = \bigl(t(s_0,\alpha\gamma^j),t'(s_0',\beta\delta^j)\bigr) =  (c,c').
\]
Moreover
\[
t''\bigl((c,c'), \gamma^{k-j}\bigr) = \bigl(t(c,\gamma^{k-j}),t'(c',\delta^{k-j})\bigr) = (c,c')
\]
so $\bigl((c,c'),\gamma^{k-j}\bigr)$ is an accessible cycle in $T''$.  But neither $|\gamma|$ nor $k-j$ is multiple of~$p$, so the length $|\gamma^{k-j}|  = (k-j)\,|\gamma|$ of this cycle is not a multiple of~$p$.  We conclude that $T''$ is oblivious to~$p$, so $f'f$ is oblivious to~$p$.
\end{proof}

\begin{proof}[Proof of Theorem~\ref{thm:NotFG}]Let $\mathcal{M}$ be a submonoid of $\mathcal{R}$ that contains infinitely many~$f_p$. For each $n\in\mathbb{N}$, let $\mathcal{M}_{\leq n}$ be the submonoid of $n$ generated by all elements of $\mathcal{M}$ that can be represented by transducers with $n$ or fewer elements.  This gives us an ascending sequence of monoids
\[
\mathcal{M}_{\leq 1} \subseteq \mathcal{M}_{\leq 2} \subseteq \mathcal{M}_{\leq 3} \subseteq \cdots
\]
with $\bigcup_{n\in\mathbb{N}} \mathcal{M}_{\leq n} = \mathcal{M}$.  If $p$ is prime, then by Lemma~\ref{lem:ObliviousProduct} every element of $\mathcal{M}_{\leq p-1}$ is oblivious to~$p$.  By Lemma~\ref{lem:fpIsNotOblivious}, it follows that $f_p\notin \mathcal{M}_{\leq p-1}$ for each~$p$, and therefore $\mathcal{M}$ is an ascending union of proper submonoids.
\end{proof}

\section*{Acknowledgements}

The authors would like to thank Collin Bleak for several helpful conversations.

\bigskip
\newcommand{\doi}[1]{\href{https://doi.org/#1}{Crossref}}
\newcommand{\arXiv}[1]{\href{https://arxiv.org/abs/#1}{arXiv}}
\newcommand{\biblink}[2]{\href{#1}{#2}}
\bibliographystyle{plain}

\end{document}